\newtheorem{theorem}{Theorem}
\newtheorem{remark}{Remark} 
\newtheorem{lemma}{Lemma} 
\DeclareMathOperator{\HF}{HF}
\DeclareMathOperator{\HH}{H}
\DeclareMathOperator{\ini}{in}
\title[Monomial ideals and the failure of the SLP]{
Monomial ideals and the failure of the Strong Lefschetz property
}
\author{Nasrin Altafi}\address[Corresponding author]{Department of Mathematics, KTH Royal Institute of Technology, Sweden}\email{nasrinar@kth.se} 
\author{Samuel Lundqvist}\address{Department of Mathematics, Stockholm University, Sweden}\email{samuel@math.su.se}
\subjclass[2010]{13A02, 13D40, 13E10}
\keywords{Lefschetz properties, Hilbert series, monomial ideals, generic forms, inverse system}
\begin{document}

\begin{abstract}
We give a sharp lower bound for the Hilbert function in degree $d$ of artinian quotients $\Bbbk[x_1,\ldots,x_n]/I$ failing the Strong 
Lefschetz property, where $I$ is a monomial ideal generated in degree $d \geq 2$.
We also provide sharp lower bounds for other classes of ideals, and connect our result to the classification of the Hilbert functions forcing the Strong Lefschetz property by Zanello and Zylinski.
\end{abstract}
\maketitle

\noindent

\section{Introduction}
An artinian graded algebra has the Weak Lefschetz property 
(WLP) 
if there is a 
linear form $\ell$ such that the map induced by multiplication by $\ell$ is either surjective or injective in every degree, and the element 
$\ell$ is then called a Weak Lefschetz element, while an artinian graded algebra has the Strong Lefschetz property (SLP) 
if there is a linear form $\ell$ such that the map induced by multiplication by $\ell^i$ is either surjective or injective  in every degree, 
for all $i\geq 1$, and the element $\ell$ is then called a Strong Lefschetz element.

It has long been known that every monomial complete intersection in characteristic zero has the SLP. This result is due to Stanley  \cite{stanley} and independently by Watanabe \cite{Watanabe}. One way of thinking about this result is that an artinian algebra of the form 
$\Bbbk[x_1,\ldots,x_n]/I$, where $I$ is a monomial ideal, has the SLP when the number of minimal generators is as small as possible (equal to $n$). 
When we increase the number of monomial generators of the ideal, it is not always the case that the quotient algebra has the SLP, or even the WLP. 
Indeed, for each $n\geq 3$, the equigenerated monomial almost complete intersection 
$\Bbbk[x_1,\ldots,x_n]/(x_1^n,\ldots,x_n^n,x_1 \cdots x_n)$ fails the WLP. This was noticed for $n = 3$ by Brenner and Kaid \cite{brenner}, while the 
proof of the general case $n \geq 4$ is attributed to Migliore, Mir\'o-Roig, and Nagel \cite{MMN}.

Given these results, it is natural to consider the following question: Suppose that $I$ is an equigenerated artinian monomial ideal such that $\Bbbk[x_1,\ldots,x_n]/I$ 
fails one of the Lefschetz properties. What can we say about the number of minimal generators of $I$?

The first result in this direction goes back to Mezzetti and Mir\'o-Roig \cite{MM} who 
provided a sharp upper bound for the Hilbert function in degree $d$ of minimal monomial Togliatti systems. Recall that a monomial ideal 
$I\subset \Bbbk[x_1,\ldots,x_n]$ generated in degree $d$ is called a monomial Togliatti system if the quotient $\Bbbk[x_1,\ldots,x_n]/I$ fails the WLP in 
degree $d-1$ by failing injectivity. 
Togliatti systems were defined in  \cite{MMO} and the name is in honour of Engenio Togliatti who characterized smooth Togliatti systems for 
$n=d=3$ \cite{Togliatti1,Togliatti2}.

The first author and Boij \cite{nasrinmats} recently 
gave the following sharp lower bound for the Hilbert function in degree $d$ of an artinian quotient 
$\Bbbk[x_1,\ldots,x_n]/I$ failing the WLP, where $I$ is a monomial ideal generated in degree $d$.

\begin{theorem} \cite[Theorem 1.1, Theorem 1.2]{nasrinmats} \label{thm1}
Let $n\geq 3$ and let $\Bbbk$ be a field of characteristic zero. Let $I\subset S=\Bbbk[x_1, \ldots ,x_n]$ be a monomial ideal generated in degree $d\geq 2$. 
Assume that $R=S/I$ fails the WLP. Then 
$$\HF(R,d) \geq  \begin{cases}
3(d-1) & \text{if } n=3 \text{ and } d \text{ odd,} \\
3(d-1)+1 & \text{if } n=3 \text{ and } d \text{ even,} \\
2d & \text{ if } n\geq 4,
\end{cases}$$
where $\HF(R,d)$ is the value of the Hilbert function in degree $d$ of $R$.

Furthermore, the bounds are sharp.

\end{theorem}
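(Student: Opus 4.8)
The plan is to reduce the statement to a single multiplication map, dualize it through Macaulay's inverse systems, and thereby turn it into a sharp extremal problem about the support of a polynomial annihilated by $\sum_i\partial_{y_i}$. Since $I$ is monomial and $\mathrm{char}\,\Bbbk=0$, it is standard that $R$ has the WLP if and only if $\ell=x_1+\cdots+x_n$ is a weak Lefschetz element, so I would work only with this $\ell$. Because $I$ is generated in degree $d$ we have $R_i=S_i$ for $i\le d-1$, so $\times\ell$ is injective below degree $d-1$; combined with the fact that surjectivity of $\times\ell$ propagates to all higher degrees, the only failure compatible with a small value of $\HF(R,d)$ occurs in $\times\ell\colon R_{d-1}\to R_d$. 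When $\HF(R,d)<\HF(R,d-1)=\binom{n+d-2}{n-1}$ this map cannot be injective, so failure of the WLP is exactly non-surjectivity of this one map; the finitely many small-$d$ configurations with $\HF(R,d)\ge\binom{n+d-2}{n-1}$ (where the bound is not yet automatic) I would settle directly, using that artinianness forces $x_i^d\in I$.

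Next I would dualize: writing $(I^{\perp})_d\subseteq\Bbbk[y_1,\dots,y_n]_d$ for the degree-$d$ inverse system, the transpose of $\times\ell$ acts on $(I^{\perp})_d$ as $\sum_i\partial_{y_i}$, so non-surjectivity is equivalent to the existence of a nonzero $F\in(I^{\perp})_d$ with $\sum_i\partial_{y_i}F=0$. Monomiality gives $(I^{\perp})_d=\mathrm{span}\{y^a:x^a\notin I\}$, hence $\HF(R,d)=\dim(I^{\perp})_d\ge\#\mathrm{supp}(F)$, while artinianness forbids any pure power $y_i^d$ from occurring in $F$.

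This turns the theorem into the extremal problem of finding the least number of monomials in a nonzero degree-$d$ form killed by $\sum_i\partial_{y_i}$ and having no pure-power term. The kernel of $\sum_i\partial_{y_i}$ is exactly the ring of forms in the difference variables $y_i-y_n$. For $n\ge4$ the form $F=(y_1-y_2)^{d-1}(y_3-y_4)$ lies in this kernel, avoids pure powers, and has precisely $2d$ monomials; for $n=3$, where the kernel consists of the binary forms $G(y_1-y_3,\,y_2-y_3)$, products of the three pairwise differences (the Vandermonde form $(y_1-y_2)(y_1-y_3)(y_2-y_3)$ for $d=3$) realize $3(d-1)$, with one extra monomial forced in the even case.

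The hard part will be the matching lower bound. A usable constraint comes from reading $\sum_i\partial_{y_i}F=0$ coefficientwise: each degree-$(d-1)$ monomial $y^b$ dividing a support monomial must divide at least two of them, since otherwise its equation $\sum_i(b_i+1)c_{b+e_i}=0$ forces a single coefficient to vanish; the no-pure-power condition likewise makes every support monomial have at least two degree-$(d-1)$ divisors. Converting this minimum-valence-two condition in the divisibility graph—together with the weights $b_i+1$ and, for $n=3$, the availability of only two independent difference directions—into the exact counts $2d$ and $3(d-1)$ (or $3(d-1)+1$) is the combinatorial crux, the parity correction reflecting that the triangular extremal supports exist only for odd $d$. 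Sharpness then follows by reversing the construction: given an extremal $F$, let $I$ be generated by all degree-$d$ monomials outside $\mathrm{supp}(F)$, and check that $R=S/I$ is artinian, generated in degree $d$, attains $\HF(R,d)=\#\mathrm{supp}(F)$ equal to the stated bound, and fails the WLP since $F$ witnesses non-surjectivity of $\times\ell$ in degree $d-1$.
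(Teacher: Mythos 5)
First, a framing remark: the paper you were asked to compare against does not actually prove this statement at all --- Theorem \ref{thm1} is quoted verbatim from Altafi--Boij \cite{nasrinmats} and is used in this paper as an imported result, so the only proof to measure your attempt against is the one in \cite{nasrinmats}. Your setup is correct and is indeed their framework: in characteristic zero a monomial algebra can be tested for the WLP with $\ell = x_1+\cdots+x_n$; since $I$ is equigenerated in degree $d$, surjectivity of $\ell\colon R_{d-1}\to R_d$ forces the WLP, so failure of the WLP forces non-surjectivity of that single map; dualizing via inverse systems, this yields a nonzero $F\in (I^{-1})_d$ with $\sum_i \partial_{y_i}F=0$ whose support consists of monomials outside $I$ and which, by artinianness ($x_i^d\in I$ for all $i$), contains no pure power $y_i^d$; hence $\HF(R,d)\ge \vert\mathrm{supp}(F)\vert$. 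Your $n\ge 4$ example $(y_1-y_2)^{d-1}(y_3-y_4)$ with exactly $2d$ terms is correct, and the reverse construction for sharpness is sound (modulo checking $\HF(R,d)\le\HF(R,d-1)$ at the extremal values, so that non-surjectivity really contradicts maximal rank).

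The genuine gap is that you never prove the lower bounds, which are the entire quantitative content of the theorem. After your (correct) reduction, what is needed is: every nonzero degree-$d$ form killed by $\sum_i\partial_{y_i}$ and containing no pure power has at least $2d$ terms when $n\ge 4$, and at least $3(d-1)$ ($d$ odd) resp.\ $3(d-1)+1$ ($d$ even) terms when $n=3$. Note that the support bound this paper does quote from \cite{nasrinmats}, namely Theorem \ref{thm37} with $i=1$, gives only $\vert\mathrm{supp}(f)\vert\ge d+1$, which is attained by $(y_1-y_2)^d$; the whole difficulty is the improvement from $d+1$ to the stated bounds using the no-pure-power restriction, and that improvement is precisely Theorems 1.1 and 1.2 of \cite{nasrinmats}, occupying the bulk of that paper. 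The tool you propose for it --- the minimum-valence-two condition in the divisibility graph between support monomials and their degree-$(d-1)$ divisors --- cannot do this job: it is a purely combinatorial necessary condition that discards the coefficients and the weights $b_i+1$, and in particular it is blind to the parity of $d$, whereas the $n=3$ answer genuinely depends on that parity; any correct argument must exploit the actual linear relations among the coefficients (for $n=3$, the structure of binary forms $G(y_1-y_3,\,y_2-y_3)$ with vanishing pure-power coefficients). Separately, your sharpness discussion for $n=3$ is incomplete: you exhibit only the Vandermonde example for $d=3$, give no family verified to achieve $3(d-1)$ for general odd $d$, and give no example at all achieving $3(d-1)+1$ for even $d$. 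So what you have is a faithful reduction to the extremal problem plus half of the sharpness statement; the core of the theorem remains unproven.
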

In this note, we carry out a similar program for the SLP, resulting in the following classification.
\begin{theorem} \label{thm2}
Let $n \geq 3$ and let $\Bbbk$ be a field of characteristic zero. Let $I \subset S= \Bbbk[x_1,\ldots,x_n]$ be an artinian monomial ideal 
generated in degree $d \geq 2$. Let $R=S/I$ and denote the value of the Hilbert function in degree $d$ of $R$ by $\HF(R,d)$.
\begin{itemize}
\item[$(i)$]Assume that $R$ fails the SLP.  Then
$$\HF(R, d) \geq 
\begin{cases}
4 & \text{if } d= 2, \\
3 & \text{if } d \geq 3.
\end{cases}$$
Moreover, this is sharp in the sense that for any $d$ there is an example for which  $\ell^{d-1}:R_1\rightarrow R_d$ fails to be surjective and the value of the Hilbert function in degree $d$ is given by this bound.
\item[$(ii)$] Let $1\leq i\leq d-1$. If $\ell^i$ fails to have maximal rank in some degree then 
$$ \HF(R,d)\geq d-i+2.$$
Moreover, the bound is sharp for every $2\leq i\leq d-1$: there is an example where $\ell^{i}:R_{d-i}\rightarrow R_d$ is not surjective and the bound is achieved.
\end{itemize}

\end{theorem}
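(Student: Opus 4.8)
\emph{Reduction and localization.} Since $I$ is a monomial ideal and $\operatorname{char}\Bbbk = 0$, the torus $(\Bbbk^*)^n$ acts on $R$, and the standard argument for monomial ideals shows that the rank of $\times \ell^i$ for a general linear form is already attained by $\ell = x_1 + \cdots + x_n$; so throughout I would fix $\ell = x_1 + \cdots + x_n$. Because $I$ is generated in degree $d$, we have $R_j = S_j$ for $j \le d-1$, and on the polynomial ring $\times \ell^i$ is injective; hence $\times \ell^i : R_a \to R_{a+i}$ has maximal rank whenever $a+i \le d-1$. Thus the first degree in which $\ell^i$ can fail maximal rank has target degree at least $d$, and the distinguished map to analyze is $\mu := (\times \ell^i) : R_{d-i} \to R_d$, whose source $R_{d-i} = S_{d-i}$ is the full space of forms of degree $d-i$.

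\emph{The core of $(ii)$.} When $\HF(R,d)$ is small the source of $\mu$ is larger than the target, so maximal rank for $\mu$ means surjectivity, matching the sharpness statement. The claim I would prove is: if $\HF(R,d) \le d-i+1$, then $\ell^i$ has maximal rank in every degree. The main step is a combinatorial analysis of the $\HF(R,d)$ degree-$d$ monomials lying outside $I$ and of their images under $\times \ell^i$. A small value of $\HF(R,d)$ should force the surviving monomials into a very restricted staircase essentially supported on few variables, and then an explicit rank computation, exploiting the positivity of the multinomial coefficients in the expansion of $\ell^i = (x_1+\cdots+x_n)^i$, shows that $\mu$ is surjective; combined with the downward propagation of injectivity and upward propagation of surjectivity for the single form $\ell$, this yields maximal rank of $\times \ell^i$ in all degrees. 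I expect this rank computation to be the principal obstacle. A second technical point is to treat a failure whose target degree $t$ exceeds $d$: here one needs a comparison of the form $\HF(R,d) \ge \HF(R,t)$ controlling the Hilbert function past the generating degree in the small regime, so that the bound obtained at degree $t$ descends to degree $d$.

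\emph{Part $(i)$.} For $d \ge 3$ the bound $\HF(R,d) \ge 3$ follows from $(ii)$ whenever the failing power satisfies $i \le d-1$, since then $\HF(R,d) \ge d-i+2 \ge 3$. What remains is to rule out algebras with $\HF(R,d) \le 2$ that fail the SLP only through high powers $i \ge d$, which the combinatorial claim of $(ii)$ does not reach; here I would use that smallness of $\HF(R,d)$ together with artinianness (which forbids a surviving isolated pure power) constrains the tail of the Hilbert function enough to show such $R$ do satisfy the SLP. For instance $\HF(R,d) = 0$ forces $I = \mathfrak{m}^d$, and $S/\mathfrak{m}^d$ has the SLP because every multiplication into degree $\ge d$ is the zero map onto the zero space. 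The exceptional value $4$ for $d=2$ would be handled separately: a direct check that every artinian monomial ideal generated in degree $2$ with $\HF(R,2) \le 3$ has the SLP, together with a failing example at $\HF(R,2)=4$, shows that $d=2$ is genuinely special.

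\emph{Sharpness.} Finally I would exhibit, for each $d$ and each $2 \le i \le d-1$, an explicit artinian monomial ideal with $\HF(R,d) = d-i+2$ for which $\mu : R_{d-i} \to R_d$ is not surjective; the natural candidate has $R_d$ supported on a length-$(d-i+2)$ staircase in two variables, arranged so that the top corner cannot be hit by $\ell^i$, with non-surjectivity confirmed by the multinomial expansion of $\ell^i$. A companion example with $\HF(R,d)$ equal to $3$, respectively $4$ when $d=2$, realizes the bound in part $(i)$.
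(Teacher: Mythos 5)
Your outline correctly identifies the architecture of the argument: reduce to the single map $\ell^i : R_{d-i} \to R_d$, propagate injectivity below degree $d$ and surjectivity above it, split off the high powers $i \ge d$ in part (i), and treat $d=2$ separately. This matches the paper's skeleton (its Lemmas \ref{lemma-l^i} and \ref{SLPlemma} are exactly your propagation and reduction steps). But the proposal has a genuine gap at its center, and you flag it yourself: the claim that $\HF(R,d) \le d-i+1$ forces $\mu : R_{d-i} \to R_d$ to be surjective is precisely the quantitative heart of the theorem, and ``a combinatorial analysis\ldots should force the surviving monomials into a very restricted staircase'' followed by ``an explicit rank computation'' is not a proof --- it is a restatement of what must be proved. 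The paper does not attack this on the primal side at all; it passes to the Macaulay inverse system, where non-surjectivity of $\mu$ becomes the existence of a nonzero $f \in (I^{-1})_d$ with $\ell^i \circ f = 0$, and then invokes Theorem \ref{thm37} (from \cite{nasrinmats}): any degree-$d$ polynomial annihilated by $\ell^i$ has at least $d-i+2$ monomials in its support. Since $(I^{-1})_d$ is spanned by the monomials dual to those surviving in $R_d$, this immediately gives $\HF(R,d) \ge d-i+2$. That support bound is a nontrivial theorem in its own right; without it (or an independent proof of your ``principal obstacle''), your parts (i) and (ii) have no proof of the inequalities. Note also that the paper needs a second case you do not address: when $\HF(R,d) > \HF(R,d-i)$ the reduction lemma does not apply, and the bound instead follows from the direct estimate $\HF(R,d) > \binom{n+d-i-1}{n-1} \ge d-i+2$.

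Two further steps are under-specified. First, your treatment of the high powers $i \ge d$ in part (i) (``artinianness\ldots constrains the tail of the Hilbert function'') is vague; the paper's Lemma \ref{SLPlemma} settles this cleanly by writing $\ell^{d+k} : R_i \to R_{d+k+i}$ as a composition of two surjections once $\ell^{d-1} : R_1 \to R_d$ is surjective, using unimodality of the Hilbert function from \cite{uwe}. Second, for $d=2$ your ``direct check'' of all artinian monomial ideals with $\HF(R,2) \le 3$ is not a finite computation, since $n$ is unbounded; the paper instead derives surjectivity of $\ell : R_1 \to R_2$ from the sharp WLP bound $\HF(R,2) \ge 2d = 4$ of Theorem \ref{thm1} and only does a finite check for $n=3$. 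Finally, the sharpness examples are left qualitative in your write-up; the duality framework makes them one line: take $f = y_1^{i-1}(y_2-y_3)^{d-i+1}$, let $I$ be generated by the monomials dual to $\mathcal{E}_d \setminus \mathrm{supp}(f)$, and observe $\ell^i \circ f = 0$ with $|\mathrm{supp}(f)| = d-i+2$. Verifying non-surjectivity directly via multinomial expansions, as you propose, amounts to reconstructing this dual witness by hand.
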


\begin{remark} Over a field of characteristic zero, 
Harima, Migliore, Nagel, and Watanabe showed that every artinian homogeneous quotient 
in two variables has the SLP \cite{uwe}, explaining why the case $n = 2$ is omitted in the results.
\end{remark}

\begin{remark}
In the case $n=3, d= 2,$ a computation reveals that all quotients of artinian monomial ideals enjoy the WLP and the SLP, and this is the only case for which the conditions in Theorem \ref{thm1} 
and Theorem \ref{thm2} are empty.
\end{remark}

\begin{remark}

In Theorem \ref{thm2} part (ii), the bound holds for every  $1\leq i\leq d-1$ although when $i=1$ multiplication by $\ell$ does not have full rank which means the WLP fails. In this case, the bound in Theorem \ref{thm2} part (ii) is not sharp.
\end{remark}

The consequences of Theorem \ref{thm2} are stronger than one would first expect, and for $d \geq 3$, we will show that Theorem \ref{thm2} holds for other natural classes of artinian ideals, including ideals generated by forms of degree $d$. We will see that this generalization gives a bridge to a classification result on the Hilbert functions that force all ideals to have the SLP by Zanello and Zylinski \cite{Zanello}.

\section{Preliminaries}
Throughout the remaining part of this paper, let $\Bbbk$ be a field of characteristic zero and let $S = \Bbbk[x_1,\ldots,x_n]$, where $n \geq 3$. We will use that a monomial algebra $R$ has the SLP if and only if $\ell := x_1 + \cdots + x_n$ is a Strong Lefschetz element \cite[Remark 4.5] {atour}. 

Duality arguments will be crucial for our results and we recall here the basic notion of the inverse system. Let $\mathcal{E}=\Bbbk[y_1, \dots ,y_n]$ be the Macaulay dual ring to 
$S$, where $S$ acts on $\mathcal{E}$ by differentiation; i.e. $x_j\circ f=\partial f/\partial y_j$, for every homogeneous polynomial $f\in \mathcal{E}$. There is a 
bijection between the set of finitely generated $S$-submodules $M$ of $\mathcal{E}$ and the set of artinian quotients $S/I$ given by $I=\mathrm{Ann}_S(M)=(0:_S M)$, 
and $I^{-1}=M$. The module $I^{-1}$ is called the inverse system module of $I$, for more details see \cite{Geramita} and \cite{IK}. Using this correspondence, we 
have that $\HF(R,i)=\dim_\Bbbk(I^{-1})_i$. For every $i$, the multiplication map $\ell:(S/I)_i\rightarrow (S/I)_{i+1}$ has maximal rank if and only if the differentiation 
map  $\circ \ell:(I^{-1})_{i+1}\rightarrow (I^{-1})_i$ has maximal rank. We observe that when $I$ is a monomial ideal, for every $i$  the $i$-th graded piece of the  
inverse system module, $(I^{-1})_i$, is generated by monomials in $\mathcal{E}_d$ that are dual to the monomials in $S_i\setminus I_i$.

Finally, we state one more result from \cite{nasrinmats} which will be central to our argument.

\begin{theorem}  \cite[Theorem 3.7]{nasrinmats} \label{thm37}
Let $f\in \mathcal{E}=\Bbbk[y_1,\dots ,y_n]$ be a polynomial of degree $d$ such that $\ell^i\circ f=0$, for some $1\leq i\leq d$. Then the number of monomials with 
non-zero coefficient in $f$, $\vert \mathrm{supp}(f)\vert$, is at least $d-i+2$.
\end{theorem}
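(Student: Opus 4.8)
The plan is to convert the hypothesis into a statement about a single directional derivative and then to reduce, by induction on the number of variables, to a \emph{sparse univariate} phenomenon. Writing $D:=\ell\circ(-)=\sum_{j=1}^n \partial/\partial y_j$, the hypothesis reads $D^i f=0$. Performing the linear change of coordinates $u=y_n$ and $v_j=y_j-y_n$ $(1\le j\le n-1)$ turns $D$ into $\partial/\partial u$, so $D^i f=0$ becomes equivalent to $\deg_u f\le i-1$, i.e.\ $f=\sum_{k=0}^{i-1} u^{k} h_k$ with $h_k$ homogeneous of degree $d-k$ in the $v$'s. This normal form is only an organizing device, since linear changes of variables do not preserve $|\mathrm{supp}(f)|$; the actual counting must ultimately be carried out in the original monomial coordinates.

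The engine is the following univariate lemma: a nonzero univariate polynomial with exactly $t$ nonzero terms has no nonzero root of multiplicity $\ge t$. I would prove this by observing that a root $\alpha\neq 0$ of multiplicity $\ge t$ forces the coefficient vector to lie in the kernel of the $t\times t$ matrix $\big[(e_k)_{\underline r}\,\alpha^{e_k-r}\big]_{0\le r\le t-1}$, whose determinant factors as a power of $\alpha$ times a generalized Vandermonde determinant in the distinct exponents $e_k$; the latter is nonzero in characteristic zero, a contradiction.

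For the base case $n=2$ the reduction is immediate: the normal form shows that a binary form $f$ of degree $d$ with $\ell^i\circ f=0$ is divisible by $(y_1-y_2)^{d-i+1}$. Dehomogenizing at $y_2=1$ produces a univariate polynomial $p$ with exactly $|\mathrm{supp}(f)|$ terms and a root of multiplicity $\ge d-i+1$ at the \emph{nonzero} point $1$; the lemma then yields $|\mathrm{supp}(f)|\ge d-i+2$. This is visibly sharp, the extremal case being $f=(y_1-y_2)^{d-i+1}y_1^{\,i-1}$, and $f=(y_1-y_2)^d$ when $i=1$.

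For general $n$ I would reduce $n$ to $n-1$ by the substitution $\sigma\colon y_a\mapsto y_b$ for a suitable pair $a\neq b$. Since $D$ is symmetric in the variables, $\sigma$ intertwines the two total-derivative operators, so $\sigma(f)$ again satisfies $\ell^i\circ\sigma(f)=0$ in $n-1$ variables, is homogeneous of the same degree $d$, and, being a monomial substitution, satisfies $|\mathrm{supp}(\sigma(f))|\le|\mathrm{supp}(f)|$. Hence whenever $\sigma(f)\neq 0$ the inductive hypothesis gives $|\mathrm{supp}(f)|\ge|\mathrm{supp}(\sigma(f))|\ge d-i+2$, and $\sigma(f)\neq0$ precisely when $(y_a-y_b)\nmid f$. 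The main obstacle is the exceptional situation in which \emph{every} difference $y_a-y_b$ divides $f$, i.e.\ the Vandermonde $V=\prod_{a<b}(y_a-y_b)$ divides $f$. Here $DV=0$ (as $V$ depends only on differences), so writing $f=Vh$ gives $\ell^i\circ h=0$ with $\deg h=d-\binom n2$, and one would induct on the degree instead. The delicate point that I expect to cost the most work is bounding $|\mathrm{supp}(f)|$ from below using information about $|\mathrm{supp}(h)|$, since the support of a product is not controlled by the supports of its factors.
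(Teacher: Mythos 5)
Your univariate lemma, your $n=2$ base case, and your variable-merging step are all correct: $\sigma\colon y_a\mapsto y_b$ does intertwine the two total-derivative operators, it can only shrink the support, and $\sigma(f)\neq 0$ exactly when $(y_a-y_b)\nmid f$. The problem is the exceptional case you flag at the end, and it is a genuine gap, not a finishing touch. When every difference divides $f$, so that $f=Vh$ with $V=\prod_{a<b}(y_a-y_b)$, your reduction $\ell^i\circ h=0$ is correct (since $\ell\circ V=0$ and $\ell\circ(-)$ is a derivation), but the degree induction you propose cannot be completed as described. Since $d-i+2=\binom{n}{2}+(\deg h-i+2)$, while the inductive hypothesis only yields $\vert\mathrm{supp}(h)\vert\geq \deg h-i+2$, you would need an inequality of the shape ``multiplying by $V$ adds at least $\binom{n}{2}$ terms,'' and no such principle exists: multiplication by even a single difference can \emph{collapse} support, as in $(y_1-y_2)(y_1^{N-1}+y_1^{N-2}y_2+\cdots+y_2^{N-1})=y_1^N-y_2^N$, which takes $N$ terms to $2$. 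There is also a secondary defect: when $i>\deg h$ (e.g.\ $f=V$ itself, where $h=1$) the inductive hypothesis is not even applicable, because the theorem requires $i\leq\deg$, so that subcase needs a separate count in any event.

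Moreover, this case cannot be engineered away inside your framework, and it is not a degenerate corner. A monomial substitution $y_j\mapsto\mu_j$ intertwines the operators $\sum_j\partial/\partial y_j$ only if each $\mu_j$ has total derivative $1$, i.e.\ only if each $\mu_j$ is a variable; so variable mergings are the \emph{only} support-non-increasing specializations available, and every one of them kills every multiple of $V$. And the multiples of $V$ retain the full difficulty of the theorem: for $f=V\cdot(y_1-y_2)^N$ one has $\ell\circ f=0$, so the required bound is $d-i+2=\binom{n}{2}+N+1$, which grows with $N$, whereas uniform counts available for this family (for instance the $n!$ vertices of the Newton polytope of $V$, all of which do survive in $\mathrm{supp}(Vh)$) do not grow at all. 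Closing this case therefore needs a genuinely new argument coupling $V$ with the structure of $h$. Note finally that the paper under review does not prove the statement at all --- it imports it as \cite[Theorem 3.7]{nasrinmats} --- so your argument has to stand on its own, and in its current form it does not.
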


\section{ Main result}\par 

Before providing the proof of the main result of the paper we state the following two lemmas in which we provide equivalent conditions for the map $\ell^i$ to have maximal rank for $1\leq i\leq d-1$. Notice that we do not require the ideals in Lemma \ref{lemma-l^i} and Lemma \ref{SLPlemma} to be equigenerated.
\begin{lemma}\label{lemma-l^i}
Let $R=S/I$ be an artinian algebra, where $n\geq 3$ and 
 $I = (m_1,\ldots,m_s)$ a monomial ideal such that $\min \{\deg(m_i)\} = d \geq 2$. Assume that $\HF(R, d-i)\geq \HF(R,d)$ for some 
$1\leq i\leq d-1$. Then the map $\ell^{i}:R_{d-i}\rightarrow R_d$ is surjective if and only if  the map $\ell^i:R_j\rightarrow R_{i+j}$ has maximal rank for every $j$.
\end{lemma}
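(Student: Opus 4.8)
The plan is to prove the two implications separately, the genuinely substantial one being \emph{surjectivity at $d-i$ implies maximal rank in every degree}. The converse (maximal rank everywhere implies surjectivity at $d-i$) is immediate: taking $j=d-i$ and invoking the hypothesis $\HF(R,d-i)\ge \HF(R,d)$, maximal rank of $\ell^i:R_{d-i}\to R_d$ forces the rank to equal $\dim_\Bbbk R_d$, i.e. the map is onto. This direction needs no further work, and I note in passing that it is the \emph{only} place where the hypothesis $\HF(R,d-i)\ge\HF(R,d)$ is used, since it is exactly the condition converting ``maximal rank'' into ``surjective'' at that single spot.

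For the substantial implication, assume $\ell^i:R_{d-i}\to R_d$ is surjective and aim for maximal rank of $\ell^i:R_j\to R_{j+i}$ for all $j$. The first thing I would record is that, since every minimal generator of $I$ has degree at least $d$, one has $R_k=S_k$ for all $k\le d-1$. This disposes of the low range $j+i\le d-1$ at once: there both source and target coincide with graded pieces of $S$, and as $S$ is a domain, multiplication by $\ell^i$ is injective; since the Hilbert function of $S$ is strictly increasing, injectivity here is precisely maximal rank.

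The core of the argument is the high range $j\ge d-i$, where I would show $\ell^i:R_j\to R_{j+i}$ is surjective. From $\ell^i R_{d-i}=R_d$ together with the inclusion $\ell^i R_{d-i}\subseteq \ell R_{d-1}$ I obtain that $\ell:R_{d-1}\to R_d$ is surjective, equivalently $(R/\ell R)_d=0$. Because $R/\ell R$ is a standard graded algebra, vanishing in degree $d$ propagates upward, so $\ell:R_m\to R_{m+1}$ is surjective for every $m\ge d-1$. I would then bootstrap this from $\ell$ to the power $\ell^i$ by induction on $j\ge d-i$: assuming $\ell^i R_j=R_{j+i}$, one has $\ell^i R_{j+1}\supseteq \ell^i(\ell R_j)=\ell(\ell^i R_j)=\ell R_{j+i}=R_{j+i+1}$, where the final equality uses surjectivity of $\ell$ in degree $j+i\ge d$. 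This gives surjectivity of $\ell^i$ throughout the high range, and a surjection $R_j\to R_{j+i}$ automatically has maximal rank, since it forces $\HF(R,j+i)\le \HF(R,j)$ so that the rank meets the minimum of the two dimensions. As the low range $j\le d-1-i$ and the high range $j\ge d-i$ are complementary and exhaust all $j$, the claim follows.

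The main obstacle I anticipate is precisely the upward propagation of surjectivity for the \emph{power} $\ell^i$ rather than for $\ell$ itself: a direct induction on $\ell^i$ stalls because $R_{j+1}$ is strictly larger than $\ell R_j$. The way around it is to first extract surjectivity of the linear map $\ell$ in all high degrees through the standard graded quotient $R/\ell R$, and only afterwards run the induction for $\ell^i$, where the single multiplication by $\ell$ at the top supplies the needed surjectivity at each inductive step.
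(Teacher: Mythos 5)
Your proof is correct, and its overall decomposition matches the paper's exactly: surjectivity propagates upward from degree $d$, injectivity is automatic below degree $d$ because $R_j=S_j$ there, and the converse is just the case $j=d-i$ combined with the hypothesis $\HF(R,d-i)\geq \HF(R,d)$ (your side remark that this is the only place the hypothesis is needed is accurate). The one point of divergence is the high-degree step, and there your detour through the linear form is unnecessary. The paper applies the vanishing-propagation argument directly to $R/\ell^i R$: surjectivity of $\ell^i:R_{d-i}\to R_d$ says exactly $\left[R/\ell^i R\right]_d=0$, and since $R/\ell^i R$ is a standard graded algebra --- any quotient of $R$ by a homogeneous ideal is, with no requirement that the ideal be generated in degree one --- one gets $\left[R/\ell^i R\right]_{d+k}=\left[R/\ell^i R\right]_k\cdot\left[R/\ell^i R\right]_d=0$ for all $k\geq 1$, i.e. $\ell^i:R_{d+k-i}\to R_{d+k}$ is surjective. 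So the ``main obstacle'' you identify --- that upward propagation works for $\ell$ but stalls for the power $\ell^i$ --- is not actually an obstacle: propagation of vanishing only needs the ambient algebra to be generated in degree one, not the element you quotient by. Your workaround (first extract surjectivity of $\ell$ in all high degrees from $R/\ell R$, then reinstate one factor of $\ell$ per inductive step) is valid and costs only a few extra lines, but the direct argument collapses your induction into a single sentence.
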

\begin{proof}
Suppose that $\ell^i:R_{d-i}\rightarrow R_d$ is surjective. This means $\left[R/\ell^i R\right]_d=0$ and also $\left[R/\ell^i R\right]_{d+k}=0$ for every $k\geq 1 $ which 
implies that $\ell^i:R_{d+k-i}\rightarrow R_{d+k}$ is surjective, for every $k\geq 1$. On the other hand, since degrees of the generators of $I$ is at least $d$, for every $k\geq 1$ the 
map $\ell^i:R_{d-k-i}\rightarrow R_{d-k}$ is trivially injective. The other assertion follows immediately by setting $j=d-i$.
\end{proof}
\begin{lemma}\label{SLPlemma}
Let $R=S/I$ be an artinian algebra, where $n\geq 3$ and 
 $I = (m_1,\ldots,m_s)$ a monomial ideal such that $\min \{\deg(m_i)\} = d \geq 2$.
Suppose that $\HF(R, 1)\geq \HF(R,d)$. Then $R$ has 
the SLP if and only if the map $\ell^{d-1}:R_1\rightarrow R_d$ is surjective.
\end{lemma}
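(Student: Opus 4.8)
The forward implication is immediate: if $R$ has the SLP then in particular $\ell^{d-1}\colon R_1\to R_d$ has maximal rank, and since $\HF(R,1)\ge \HF(R,d)$ this forces surjectivity.

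For the converse, assume $\ell^{d-1}\colon R_1\to R_d$ is surjective; the plan is to verify that $\ell^i\colon R_j\to R_{i+j}$ has maximal rank for every $i\ge 1$ and every $j$. The starting point is that, since the generators of $I$ have degree at least $d$, one has $R_j=S_j$ for $0\le j\le d-1$, so the Hilbert function is strictly increasing on $[0,d-1]$ and the hypothesis $\HF(R,1)\ge \HF(R,d)$ propagates to $\HF(R,d-i)\ge \HF(R,d)$ for all $1\le i\le d-1$. First I would dispose of the range $1\le i\le d-1$: factoring the surjection $\ell^{d-1}$ as $R_1\xrightarrow{\ell^{d-1-i}}R_{d-i}\xrightarrow{\ell^{i}}R_d$ forces the second map $\ell^i\colon R_{d-i}\to R_d$ to be surjective as well, and then Lemma \ref{lemma-l^i} (whose hypothesis I have just checked) upgrades this to maximal rank of $\ell^i$ in every degree.

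The remaining and harder range is $i\ge d$. Here I would first record that $R$ is unimodal with peak at degree $d-1$: for $j\le d-2$ the map $\ell\colon R_j\to R_{j+1}$ is injective because both pieces coincide with those of $S$, while for $j\ge d-1$ the case $i=1$ above gives surjectivity, so $\HF(R,\cdot)$ increases up to $d-1$ and decreases afterwards. For $j\ge 1$ I would then factor $\ell^i$ through $R_d$ (using the surjections $\ell^{d-j}\colon R_j\to R_d$ from the previous step when $1\le j\le d-1$, and repeated surjectivity of $\ell$ from degrees $\ge d-1$ when $j\ge d$), concluding that $\ell^i\colon R_j\to R_{i+j}$ is surjective; unimodality guarantees $\HF(R,j)\ge \HF(R,i+j)$, so surjectivity is exactly maximal rank.

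The one case this does not reach is $j=0$ with $i\ge d$, where the map $R_0\to R_i$ must instead be shown injective whenever $R_i\ne 0$, and this is the main obstacle. I expect to resolve it using the monomial structure: if $R_i\ne 0$ there is a monomial $x^\alpha\notin I$ of degree $i$, and since $\ell^i=(x_1+\cdots+x_n)^i$ is a sum of distinct monomials with positive coefficients, its reduction modulo the monomial ideal $I$ retains the basis element $x^\alpha$ and is therefore nonzero; hence $\ell^i\ne 0$ and $\ell^i\colon R_0\to R_i$ is injective, which is maximal rank. Equivalently, in the inverse system this is the statement that $\ell^i\circ y^\alpha=i!\neq 0$ in characteristic zero, so that $\circ\,\ell^i\colon (I^{-1})_i\to(I^{-1})_0$ is nonzero. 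Assembling the three ranges yields the SLP.
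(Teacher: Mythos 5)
Your proof is correct and follows essentially the same route as the paper's: the forward direction is trivial, and the converse proceeds by factoring the surjection $\ell^{d-1}$ through $R_{d-i}$, invoking Lemma \ref{lemma-l^i} for the exponents $1\le i\le d-1$, and then treating exponents $\ge d$ by composing surjections through $R_d$ and using unimodality to turn surjectivity into maximal rank. Two places where you go beyond the paper's write-up deserve mention. First, the paper gets unimodality by citing \cite{uwe} once $\ell$ is known to have maximal rank in every degree, whereas you prove it directly (injectivity below degree $d-1$ because $R_j=S_j$ there, surjectivity from degree $d-1$ on); both are valid, yours being self-contained. Second, and more substantively, the paper's argument for exponents $i\ge d$ only discusses source degrees $1\le j\le d-1$ and is silent on the map $\ell^i\colon R_0\to R_i$, where maximal rank means $\ell^i\neq 0$ in $R_i$ whenever $R_i\neq 0$. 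You correctly flag this case and close it using the monomial structure: every multinomial coefficient of $(x_1+\cdots+x_n)^i$ is nonzero in characteristic zero, so the image of $\ell^i$ in $R=S/I$ retains any monomial basis element surviving in $R_i$; equivalently, $\ell^i\circ y^\alpha=i!\neq 0$ in the inverse system. So your proposal is not only correct but slightly more complete than the paper's own proof of this lemma.
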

\begin{proof}
Assuming that $\HF(R, 1)\geq \HF(R,d)$ implies that $\HF(R, d-i)\geq \HF(R,d)$, for every $1\leq i\leq d-1$.  Now suppose that $\ell^{d-1}:R_1\rightarrow R_d$ is surjective. Then
the map $\ell^i: R_{d-i}\rightarrow R_d$ is also surjective for every $1\leq i\leq d-1$. Lemma \ref{lemma-l^i} implies that for every $1\leq i\leq d-1$ the map $\ell^i$ has 
maximal rank in all degrees. On the other hand, since the map $\ell$ has maximal rank in all degrees, the Hilbert function of $R$ is unimodal, see \cite{uwe}. Therefore, for every $k\geq 0$ and 
$1\leq i\leq d-1$ we have $\HF(R,i)\geq \HF(R,d+k+i)$, so the map $\ell^{d+k}:R_i\rightarrow R_{d+k+i}$  has maximal rank if and only if it is surjective. We notice that $\ell^{d+k}:R_i\rightarrow R_{d+k+i}$ is the composition of two surjective maps  $\ell^{d-i}:R_i\rightarrow R_{d}$ and $\ell^{i+k}:R_d\rightarrow R_{d+k+i}$ and therefore is surjective. We have shown that $R$ has the SLP. The other 
implication is trivial.
\end{proof}

Now we are able to prove our result for equigenerated monomial ideals failing the SLP.
\begin{proof}[Proof of Theorem \ref{thm2}] We first prove the second part.
\begin{itemize}
\item[$(ii)$]
Fix an integer $i$ such that $1\leq i\leq d-1$. First assume that  $\HF(R, d-i)\geq \HF(R,d)$. Suppose that  $\ell^i:R_j\rightarrow R_{i+j}$ does not have maximal rank for some $j\geq 1$. 
Using Lemma \ref{lemma-l^i}, we get that the map $\ell^i:R_{d-i}\rightarrow R_{d}$ is not surjective. Therefore,  Theorem \ref{thm37} implies that $\HF(R,d)\geq d-i+2$. 
Now assume that  
$\HF(R, d)>\HF(R,d-i)$. For $1\leq i\leq d-1$ we have
$$\HF(R, d) >\HF(R,d-i)=\binom{n+d-i-1}{n-1}\geq d-i+2,$$
where the last inequality follows inductively from
$$\binom{n+d-i-1}{n-1} = \frac{n+d-i-1}{n-1} \binom{(n-1)+d-i-1}{(n-1)-1}$$ and
$$\binom{3+d-i-1}{3-1} = \binom{d-i+2}{2} \geq d-i+2. $$

In order to show that the bound is sharp for every $2\leq i\leq d-1$, consider $f = y^{i-1}_1(y_2-y_3)^{d-i+1}\in \mathcal{E}_d$ and let $I\subset S$ to be the monomial 
ideal generated by monomials in $S_d$ dual to the monomials in $\mathcal{E}_d\setminus \mathrm{Supp}(f)$. We have that $f\in (I^{-1})_d$ and 
$$\HF(R, d) = \HF(S, d)-\vert \mathcal{G}(I)\vert =\vert \mathrm{supp}(f)\vert = d-i+2,$$ where $\mathcal{G}(I)$ is the minimal generating set of $I$. Observe that  
$\ell^i\circ f = 0$ which implies that the differentiation map  $\circ \ell^i: (I^{-1})_d\rightarrow (I^{-1})_{d-i}$ is not injective, or equivalently, that  $\ell^i:R_{d-i}\rightarrow R_{d}$ 
is not surjective.

\item[$(i)$]
Let $d\geq 3$ and assume that $R$ fails the SLP.

First suppose that $\HF(R, 1)\geq \HF(R,d)$. Then using Lemma \ref{SLPlemma} we conclude that the map $\ell^{d-1}:R_1\rightarrow R_d$ is not surjective and Theorem \ref{thm37} 
implies that $\HF(R,d)\geq 3$. Now suppose we have that  $\HF(R, d)> \HF(R,1)=n\geq 3$, so we get the desired inequality. In this case, $d\geq 3$, the sharpness of the bound is implied 
by the second part. In fact, the polynomial $f=y^{d-2}_1(y_2-y_3)^2$ is in the kernel of the map $\circ \ell^{d-1}:(I^{-1})_d\rightarrow (I^{-1})_1$ and we have that 
$\HF(R, d) =\vert \mathrm{supp}(f)\vert = 3$.

Now assume that $d=2$ and that $R$ fails the SLP. 

If $n=3$ then a calculation shows that every artinian quadratic monomial ideal in three variables has the SLP, so there is nothing to prove. Thus we assume $n\geq 4$. By Theorem \ref{thm1}, the map $\ell: R_1 \to R_2$ is surjective when $\HF(R,2) < 4$, so by Lemma \ref{SLPlemma}, $R$ has the SLP.
Therefore, if $R$ fails the SLP then $\HF(R,2)\geq 4$ and the sharpness of the bound is implied by Theorem \ref{thm1}.
\end{itemize}
\end{proof}

We turn directly to the generalization of Theorem \ref{thm2}. 

\begin{theorem} \label{cor}

Let $n \geq 3$ and let $\Bbbk$ be a field of characteristic zero. Let $I \subset S= \Bbbk[x_1,\ldots,x_n]$ be an artinian  ideal 
generated in degree $d \geq 3$, or an artinian monomial ideal 
 $(m_1,\ldots,m_s)$  such that $\min \{\deg(m_i)\} = d \geq 3$, or an artinian ideal 
 $(f_1,\ldots,f_s)$  such that $\min \{\deg(f_i)\} = d \geq 3$. 
Let $R=S/I$ and denote the value of the Hilbert function in degree $d$ of $R$ by $\HF(R,d)$.
\begin{itemize}
\item[$(i)$]Assume that $R$ fails the SLP.  Then 
$$\HF(R, d) \geq 3.$$ Moreover, this is sharp in the sense that for any $d$ there is an example belonging to the specific class for which  $\ell^{d-1}:R_1\rightarrow R_d$ fails to be surjective.
\item[$(ii)$] Let $1\leq i\leq d-1$. If $\ell^i$ fails to have maximal rank in some degree then 
$$ \HF(R,d)\geq d-i+2.$$
Moreover, the bound is sharp for every $2\leq i\leq d-1$: there is an example where $\ell^{i}:R_{d-i}\rightarrow R_d$ is not surjective and the bound is achieved.
\end{itemize}

\end{theorem}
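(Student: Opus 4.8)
The plan is to reduce all three classes to the purely monomial case, which is in turn handled by the argument already given for Theorem~\ref{thm2}. The first observation is that the proofs of Lemma~\ref{lemma-l^i}, Lemma~\ref{SLPlemma}, and of Theorem~\ref{thm2}$(i),(ii)$ never use that the ideal is equigenerated, and (for the two lemmas) never use that it is monomial: they only use that $I_j=0$ for $j<d$ (so that $R_j=S_j$ and multiplication by $\ell^i$ is automatically injective below degree $d$), the standard-gradedness of $R$, the duality $\HF(R,j)=\dim_\Bbbk(I^{-1})_j$, Theorem~\ref{thm37}, and the unimodality coming from \cite{uwe}. Hence for a monomial ideal with $\min\{\deg(m_i)\}=d\geq 3$ (the second class) both parts follow verbatim from the proof of Theorem~\ref{thm2}; the value $4$ in the case $d=2$ disappears simply because we now assume $d\geq 3$. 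The entire difficulty is therefore the passage to \emph{non-monomial} ideals, where Theorem~\ref{thm37} no longer bounds $\HF(R,d)$: a single polynomial $f\in(I^{-1})_d$ may have far more terms than $\dim_\Bbbk(I^{-1})_d$.

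To treat part~$(ii)$ for an arbitrary ideal $I$ with $\min\{\deg(f_i)\}=d$, I would first dispose of the case $\HF(R,d)>\binom{n+d-i-1}{n-1}$: here $\HF(R,d)>\binom{n+d-i-1}{n-1}\geq d-i+2$ by the same binomial estimate as in Theorem~\ref{thm2}, and nothing else is required. So assume $\HF(R,d)\leq\binom{n+d-i-1}{n-1}=\HF(R,d-i)$, the equality holding because $I_{d-i}=0$. By the (general) Lemma~\ref{lemma-l^i}, the hypothesis that a generic $\ell$ fails to have maximal rank for $\ell^i$ in some degree is equivalent to the non-surjectivity of $\ell^i\colon R_{d-i}\to R_d$; since surjectivity is an open condition on $\ell$, the maximal rank $r$ of this map over all linear forms satisfies $r<\dim_\Bbbk R_d$.

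The key step, and the place where the new idea is needed, is a Gröbner degeneration. I would fix a term order and let $J=\ini(I)$, with associated flat one-parameter family $I_t$ (for $t\neq 0$ a copy of $I$, and $t=0$ giving $J$); the Hilbert function, the vanishing $I_j=0$ for $j<d$, artinianity, and $I_d\neq 0$ are all preserved, so $J$ is an artinian monomial ideal with minimal degree $d$. Viewing multiplication by $\ell^i$ as a family of $\HF(R,d)\times\HF(R,d-i)$ matrices over the base $\mathbb{A}^1_t\times S_1$, of constant fibre dimensions by flatness, its rank is lower semicontinuous, and (for $t\neq 0$ the fibre algebra is a copy of $R$ and a generic $\ell$ pulls back to a generic $\ell$) its generic value is exactly $r$. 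Evaluating at the point $\bigl(0,\;x_1+\cdots+x_n\bigr)$ therefore yields a map $\ell^i\colon(S/J)_{d-i}\to(S/J)_d$ of rank at most $r<\dim_\Bbbk(S/J)_d$ with $\ell=x_1+\cdots+x_n$, i.e.\ a non-surjective map for the \emph{monomial} algebra $S/J$ and its distinguished Lefschetz form. Dualizing, there is $0\neq f\in(J^{-1})_d$ with $\ell^i\circ f=0$, and Theorem~\ref{thm37} gives $|\mathrm{supp}(f)|\geq d-i+2$; since $J$ is monomial this support bound transfers to dimensions, whence $\HF(R,d)=\HF(S/J,d)\geq d-i+2$.

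For part~$(i)$ I would argue exactly as for Theorem~\ref{thm2}: if $\HF(R,1)\geq\HF(R,d)$ then the (general) Lemma~\ref{SLPlemma} identifies the failure of the SLP with the non-surjectivity of $\ell^{d-1}\colon R_1\to R_d$, and applying the part~$(ii)$ just proved, with $i=d-1$, gives $\HF(R,d)\geq 3$; otherwise $\HF(R,d)>\HF(R,1)=n\geq 3$. Sharpness in all three classes is witnessed by the monomial examples of Theorem~\ref{thm2}: the ideal cut out by $f=y_1^{i-1}(y_2-y_3)^{d-i+1}$ (respectively $y_1^{d-2}(y_2-y_3)^2$) is generated in the single degree $d$, hence lies simultaneously in all three families, is monomial, and realizes the bound. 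The main obstacle is precisely the semicontinuity step: one must guarantee that the rank drop survives the degeneration \emph{at the specific form $x_1+\cdots+x_n$}, which is exactly what is needed to feed the monomial inputs (Theorem~\ref{thm37} together with the monomial support bound), and this is what forces the reduction to $\ini(I)$ rather than a direct inverse-system computation on $I$ itself.
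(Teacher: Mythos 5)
Your proposal is correct and follows essentially the same route as the paper: pass to the initial ideal $J=\ini(I)$, transfer the failure of maximal rank from $R$ to the monomial algebra $S/J$ at the form $x_1+\cdots+x_n$, apply the machinery of Lemmas \ref{lemma-l^i}, \ref{SLPlemma} and Theorem \ref{thm37} (noting, as the paper does, that the lemmas need neither equigeneration nor monomiality), and witness sharpness by the monomial examples common to all three classes. The only difference is that your explicit Gr\"obner-degeneration/rank-semicontinuity step is exactly the content of Wiebe's Proposition 2.9 (in the strengthened form the paper extracts from its proof), which the paper simply cites rather than reproves.
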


\begin{proof}

Let $R^\prime=S/J$ where $J= \ini(I)$ is the initial ideal of $I$ with respect to a term order. Notice that $J$ is a monomial ideal such that $\min\{\deg(m_i)\}=d\geq 3$ and therefore Lemmas \ref{lemma-l^i} and \ref{SLPlemma} hold for $J$. 

Wiebe's result \cite[Proposition 2.9]{Wiebe} states that if $R^\prime$ has the SLP then the same holds for $R$. The proof  of \cite[Proposition 2.9]{Wiebe} actually reveals a more general fact: for a general enough $\tilde{\ell}$ and for  every $i,j\geq 0$, $\tilde{\ell}^j:R_i\rightarrow R_{i+j}$ has maximal rank if $\ell^j:R^\prime_i\rightarrow R^\prime_{i+j}$ has maximal rank. 

The inequalities now follow by the same line as of the proof of Theorem \ref{thm2}.  For the sharpness parts, we use that artinian monomial ideals generated in degree $d$ are part of each of the three classes.

\end{proof}

\begin{remark}
For $d=2$, the bounds in Theorem 1 and Theorem 2 coincide, and in fact, in the proof of Theorem \ref{thm2}, we use Theorem \ref{thm1} to obtain sharp bounds for $\HF(R,2)$ for artinian monomial ideals generated in degree $2$. But for the classes of artinian ideals in Theorem \ref{cor}, the same approach does not provide a bound for $\HF(R,2)$, since Theorem 1 is only valid for equigenerated monomial ideals.

However, when $n=3$, if a quadratic artinian ideal $I$ fails the SLP,  then $\HF(R,2)\geq 4$, which coincides with the bound given in Theorem \ref{thm2}. To see this, notice that if $\HF(R,2)\leq 2$, then Theorem \ref{thm37} implies that $\ell: [{S/\ini(I)}]_1\rightarrow {[S/\ini(I)]}_2$ is surjective and therefore, using Lemma \ref{SLPlemma}, we have that $S/\ini(I)$, and thus $R$, has the SLP. If $\HF(R,2)=3$, then $I$ is a complete intersection having Hilbert function $(1,3,3,1)$, which has the SLP according to the Gordan-Noether theorem \cite{GN}.

\end{remark}

\section{A connection to the Migliore-Zanello and the Zanello-Zylinski classification results}\par

Migliore and Zanello \cite{MZ} classified the Hilbert functions that force the WLP, and later Zanello and Zylinski  \cite{Zanello} classified the Hilbert functions that force the SLP.

\begin{theorem} \label{Zanello}
Let $\HH : 1,h_1=n, h_2,\dots ,h_e,h_{e+1}=0$, be a possible Hilbert function, according to Macaulay's theorem \cite{Mac}, and let $t$ be the smallest integer such that $h_t\leq t$. Then we have the following. 
\begin{itemize}
\item[$(i)$] \cite[Theorem 5]{MZ} All artinian algebras having the Hilbert function $\HH$ enjoy the WLP if and only if, for all $i=1,2,\dots , t-1$, we have 
$$
h_{i-1}=((h_i)_{(i)})_{-1}^{-1}.
$$
\item[$(ii)$]  \cite[Theorem 3.2]{Zanello} All artinian algebras having the Hilbert function $\HH$ enjoy the SLP if and only if
\begin{itemize}
\item $n=2$; or
\item $n>2, h_t\leq 2$, and, for all $i=1,2,\dots ,t-1$, we have 
$$
h_{i-1}=((h_i)_{(i)})_{-1}^{-1}.
$$
\end{itemize}
\end{itemize}
\end{theorem}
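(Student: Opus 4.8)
My plan is to establish the two equivalences in tandem, reading the SLP classification in $(ii)$ as the WLP classification in $(i)$ refined by an analysis of the behaviour at the peak degree $t$, and to prove each equivalence by splitting into a sufficiency and a necessity direction. The organising tool is Green's hyperplane restriction theorem: for a general linear form $\ell$ and $\bar A = A/\ell A$ one has $\dim_\Bbbk \bar A_i \le ((h_i)_{(i)})^{-1}_{-1}$, the right-hand side being exactly the operation appearing in the statement. The first thing I would record is that Macaulay's theorem forces the general inequality $((h_i)_{(i)})^{-1}_{-1} \le h_{i-1}$ for any Hilbert function, so the hypothesis $h_{i-1} = ((h_i)_{(i)})^{-1}_{-1}$ is precisely its equality case, equivalently the statement that $\HH$ grows maximally from degree $i-1$ to degree $i$; I would also dispose of the clause $n = 2$ at once, using the quoted fact that every artinian algebra in two variables has the SLP.

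For the sufficiency direction of $(i)$ I would exploit that maximal growth throughout $1 \le i \le t-1$ is a condition on the Hilbert function itself: equality in Macaulay's bound forces $I_i = S_1 I_{i-1}$, that is, no minimal generator of $I$ in those degrees, for \emph{every} algebra $A = S/I$ realising $\HH$. This rigidity should in turn force a general $\ell$ to act injectively on $A_{i-1} \to A_i$ throughout the ascending range $i \le t-1$, giving maximal rank there. The complementary descending range $i \ge t$ I would treat by Macaulay duality on the inverse system, where the required surjectivity of $\ell$ becomes injectivity for the dual module and the same maximal-growth input reappears; the definition of $t$ as the first degree with $h_t \le t$ is exactly what matches the two halves up.

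For necessity I would argue contrapositively. If equality fails at some $i \le t-1$, so that $((h_i)_{(i)})^{-1}_{-1} < h_{i-1}$ strictly, then there is room to realise $\HH$ by an explicit ideal --- a lex-segment ideal is the natural model --- whose general hyperplane section in degree $i$ attains Green's bound and therefore exceeds the maximal-rank value, producing an algebra that fails the WLP and hence the SLP. To pass from $(i)$ to the additional SLP hypothesis $h_t \le 2$ (when $n > 2$) I would separate two regimes at the peak: if $h_t \ge 3$ one can build an algebra meeting all the adjacent-map conditions --- so that the WLP may survive --- yet failing surjectivity of some power $\ell^j$ in the critical degree; if $h_t \le 2$, a support-and-dimension count in the spirit of Theorem \ref{thm37} shows that no power map can drop rank. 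This dichotomy is precisely the $\HF(R,d) \ge 3$ threshold of Theorem \ref{cor} seen from the Hilbert-function side.

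The hard part will be the necessity constructions together with the duality bookkeeping across the peak. Verifying that the lex (or monomial) model realises $\HH$ exactly while simultaneously witnessing the rank drop requires careful Macaulay-expansion computations, and arranging the inverse-system duality so that the single ascending hypothesis governs the descending half is delicate. Most delicate of all is pinning the threshold at $h_t \le 2$ rather than at $1$ or $3$: this is exactly where the strong maps $\ell^j$ see strictly more than the weak map $\ell$, and it is the step that must be matched against the constructions underlying Theorem \ref{cor}.
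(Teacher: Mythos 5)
First, a point of orientation: the paper contains no proof of Theorem \ref{Zanello} at all --- it is quoted from \cite{MZ} and \cite{Zanello} --- so your attempt must be measured against those original arguments. Your choice of Green's hyperplane restriction theorem as the engine for part (i) is indeed the route Migliore and Zanello take, but you have misstated Green's theorem, and the error is fatal to your sufficiency argument. Green bounds $\dim_\Bbbk(A/\ell A)_i$ by $(h_i)_{\langle i\rangle}=\binom{m_i-1}{i}+\binom{m_{i-1}-1}{i-1}+\cdots$ (tops drop, bottoms stay), \emph{not} by $((h_i)_{(i)})^{-1}_{-1}$ (both drop). Your inequality is simply false: for $A=\Bbbk[x_1,\dots,x_4]/(x_1,\dots,x_4)^3$, so $h_2=10$, a general restriction has $\dim(A/\ell A)_2=6$, while $((10)_{(2)})^{-1}_{-1}=\binom{4}{1}=4$. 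The two operations are related by the Pascal identity $((h_i)_{(i)})^{-1}_{-1}=h_i-(h_i)_{\langle i\rangle}$, so the hypothesis $h_{i-1}=((h_i)_{(i)})^{-1}_{-1}$ says precisely that Green's bound equals $h_i-h_{i-1}$, whence $\mathrm{rank}(\ell\colon A_{i-1}\to A_i)\ge h_i-(h_i)_{\langle i\rangle}=h_{i-1}$ and injectivity is forced; without that identity your version yields nothing. Your other structural claim --- that the hypothesis is equivalent to maximal Macaulay growth from $h_{i-1}$ to $h_i$, hence forces $I_i=S_1I_{i-1}$ in every algebra realizing $\HH$ --- is also false: for $h_2=10$, $h_3=19$ one has $19_{(3)}=\binom{5}{3}+\binom{4}{2}+\binom{3}{1}$ and $((19)_{(3)})^{-1}_{-1}=\binom{4}{2}+\binom{3}{1}+\binom{2}{0}=10$, yet maximal growth from $10=\binom{5}{2}$ would be $\binom{6}{3}=20\neq 19$, and the lex ideal with Hilbert function $(1,4,10,19,\dots)$ has a minimal generator in degree $3$. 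The condition says that $h_{i-1}$ is \emph{minimal} given $h_i$, which is strictly weaker than maximal growth, so the rigidity on which your sufficiency argument rests does not exist. Note also that no inverse-system duality is needed above the peak: $h_t\le t$ gives $(h_t)_{\langle t\rangle}=0$, so Green already forces $[A/\ell A]_t=0$ and hence surjectivity of $\ell$ in every degree $\ge t-1$.

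Second, part (ii) --- the content actually due to Zanello and Zylinski --- is not proved in your proposal: the paragraph devoted to it restates the dichotomy to be established (``if $h_t\ge3$ one can build \dots'', ``if $h_t\le2$ a support-and-dimension count \dots'') without exhibiting any construction or argument, and pinning the threshold at $h_t\le 2$ is exactly the hard point. Moreover, the tool you propose for the $h_t\le2$ half cannot work as stated: Theorem \ref{thm37} converts the support size of a dual form into a lower bound for a Hilbert function value only when the inverse system is spanned by monomials, i.e., for monomial ideals; for an arbitrary artinian algebra (and part (ii) quantifies over \emph{all} algebras with Hilbert function $\HH$) a single dual form of large support gives no bound on the dimension of the graded piece. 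This obstruction is precisely why the present paper proves Theorem \ref{cor} by passing to initial ideals via Wiebe's theorem rather than by a direct support count. As it stands, your outline could at best be repaired into a proof of part (i); part (ii) would still require the constructions of \cite{Zanello}.
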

In the theorem above, for integers $m$ and $i$, the $i$-binomial expansion of $m$ is denoted by $m_{(i)}$, that is, 
$$
m=m_{(i)}=\binom{m_i}{i}+\binom{m_{i-1}}{i-1}+\cdots + \binom{m_{j}}{j},
$$
where $m_i\geq m_{i-1}\geq \dots \geq j \geq 1$. Also
$$
(m_{(i)})_{-1}^{-1}=\binom{m_i-1}{i-1}+\binom{m_{i-1}-1}{i-1-1}+\cdots + \binom{m_{j}-1}{j-1},
$$
where $\binom{c}{d}=0$ whenever $c<d$ or $d<0$.

We will now discuss how Theorem \ref{Zanello} is related to the results in Theorem \ref{thm1}, Theorem \ref{thm2}, and Theorem \ref{cor}.

We consider first the WLP and the case $n \geq 4$. Suppose $I$ is a monomial ideal equigenerated in degree $d$ and $\HF(R,d) \leq d$. Both Theorem \ref{thm1}  and Theorem \ref{Zanello} part (i) imply that $R$ has the WLP. Now let $\HF(R,d)=i$ and suppose that $d+1 \leq i \leq 2d-1$. Theorem \ref{thm1} implies that $R$ has the WLP. On the other hand, $$i_{(d)} = \binom{d+1}{d} + \binom{d-1}{d-1}+ \cdots +  \binom{d-(i-(d+1))}{d-(i-(d+1))},$$ so $(i_{(d)})_{-1}^{-1} = i-1\leq 2d-2\neq \HF(R,d-1)$. Therefore, the requirement of Theorem \ref{Zanello} part (i) is not satisfied. This shows that  there is an artinian ideal such that $\HF(R,d) = i $  which fails the WLP. Moreover, by considering the initial ideal of $I$, Wiebe's result implies that there is an artinian monomial ideal such that $\HF(R,d) = i $  which fails the WLP. A similar argument can be applied for the $n=3$ case.

We now turn to the SLP. If $I$ is generated in degrees $d\geq 2$ and $\HF(R,d)\leq 2$ then one can easily check that $t=d$ in Theorem \ref{Zanello} part (ii), so $R$ has the SLP. This gives an alternative proof of one direction in part (i) of Theorem  \ref{cor}. 

In the quadratic case however, for each $n \geq 3$  there is an artinian ideal such that $\HF(R,2) = 3$ and for which the SLP fails for $R$  by Theorem \ref{Zanello} part (ii), although for all monomial artinian equigenerated ideals such that  $\HF(R,d) = 3$, the SLP holds according to Theorem \ref{thm2}. Moreover, by considering the initial ideal, by Wiebe's result we conclude that there is an artinian monomial ideal such that $\HF(R,d) =3$  which fails the SLP.

\subsection*{Acknowledgements}
The initial investigation was performed with the help of \emph{Macaulay2} \cite{M2} and the package Maximal rank properties \cite{maximalrank}. The authors thank the anonymous referee for useful comments that improved  both the results and the presentation of the paper.

\end{document}